\newtheorem{theorem}{Theorem}
\newtheorem{prop}{Proposition}
\newtheorem{corollary}{Corollary}
\newtheorem*{theorem*}{Theorem}
\newtheorem*{ESH}{Essential Simplicity Hypothesis (ESH)}
\newtheorem*{PCC}{Pair Correlation Conjecture (PCC)}
\newtheorem*{AH_P}{The Alternative Hypothesis for Pairs of Zeros (AH-Pairs)}
\newtheorem*{AH_WD}{The Alternative Hypothesis for Weak Density of Pairs of Zeros (AH-Weak Density)}
\definecolor{pink}{rgb}{1,.2,.6}
\definecolor{orange}{rgb}{0.7,0.3,0}
\definecolor{blue}{rgb}{.2,.6,.75}
\definecolor{green}{rgb}{.4,.7,.4}
\definecolor{purple}{RGB}{127,0,255}
\newcommand{\Ss}{D}
\numberwithin{equation}{section}
\begin{document}
\title[AH Pair Correlation and Essential Simplicity]{Pair Correlation Conjecture for the zeros of the Riemann zeta-function II: The Alternative Hypothesis}

\author[Goldston]{Daniel A. Goldston}
\address{Department of Mathematics and Statistics, San Jose State University}
\email{daniel.goldston@sjsu.edu}

\author[Lee]{Junghun Lee}
\address{Department of Mathematics Education, Chonnam National University}
\email{junghun@jnu.ac.kr}

\author[Schettler]{Jordan Schettler}
\address{Department of Mathematics and Statistics, San Jos\'{e} State University}
\email{jordan.schettler@sjsu.edu}

\author[Suriajaya]{Ade Irma Suriajaya}
\address{Faculty of Mathematics, Kyushu University}
\email{adeirmasuriajaya@math.kyushu-u.ac.jp}

\keywords{Riemann zeta-function, zeros, pair correlation, Alternative Hypothesis, simple zeros}
\subjclass[2010]{11M06, 11M26}
\dedicatory{\lq\lq A theory has only the alternative of being right or wrong. A model has a third possibility: it may be right, but irrelevant." 
- Manfred Eigen}


\begin{abstract} In an earlier paper, we proved that Montgomery's Pair Correlation Conjecture (PCC) for zeros of the Riemann zeta-function can be used to prove without the assumption of the Riemann Hypothesis (RH) that asymptotically 100\% of the zeros are both simple and on the critical line. This is based on a method of Gallagher and Mueller from 1978. We formulate an appropriate form of the Alternative Hypothesis (AH), which determines a different PCC, and, using the same method as above, prove that asymptotically, 100\% of the zeros are both simple and on the critical line. As in our previous paper, we do not assume RH.
\end{abstract}
\date{\today}

\maketitle


\section{Introduction and Statement of Results}
\label{sec1}

In our previous paper \cite{GLSS1}, we consider the implication of Montgomery's pair correlation conjecture for zeros of the Riemann zeta-function $\zeta(s)$ on their simplicity and horizontal distribution. In this paper, we consider a different pair correlation conjecture, which is supposedly a {\it counterhypothesis} to Montgomery's pair correlation conjecture.
We give a brief outline of this terminology.
Zeros of $\zeta(s)$ lie either at negative even integers $s=-2n$ ($n\in \mathbb{N}$) or within the strip $\{s\in \mathbb{C} : 0<{\rm Re}(s)<1 \}$ called the \lq\lq critical strip". The former zeros are called \lq\lq trivial" since their exact location is perfectly determined, and they are all known to be simple. Meanwhile, the latter zeros are only known to satisfy certain properties, including being non-real and symmetric with respect to both the real axis ${\rm Im}(s)=0$ and the \lq\lq critical line" ${\rm Re}(s) = 1/2$.
These zeros in the critical strip are called \lq\lq non-trivial" and are the main object in this paper.
The Riemann Hypothesis (RH) states that all such zeros lie on the critical line and was usually assumed when studying the pair correlation of these zeros.
That is, previous authors have mostly  considered the vertical spacings between two zeros by assuming that these zeros all lie on the critical line.
In recent work \cite{BGST-PC,BGST-CL,GLSS1}, it is shown that in many cases, this assumption is not necessary, that is, we can remove RH in the analysis.

Throughout this paper we do not assume RH; any results we mention that depend on RH are only used as a model to formulate conjectures. We retain the same notation as in \cite{GLSS1} except for a few simplifications.
We denote the non-trivial zeros of $\zeta(s)$ by $\rho=\beta+i\gamma$, thus we have for each $\rho$, that $0<\beta<1$ and $\gamma \in \mathbb{R} \backslash \{ 0 \}$, and there is a reflected zero $\overline{\rho}=\beta-i\gamma$.
Furthermore, for each $\rho$ with $\beta \neq 1/2$, we have additionally the corresponding symmetric zero $1-\overline{\rho} = 1-\beta+i\gamma$ and the reflected symmetric zero $1-\rho = 1-\beta-i\gamma$, so it then suffices to consider only zeros $\rho=\beta+i\gamma$ in the upper half-plane, i.e. $\gamma>0$.

Next, we let $N(T)$ denote the number of zeros $\rho=\beta+i\gamma$ of $\zeta(s)$ with $0<\gamma\le T$, counted with multiplicity. It is known that
\begin{equation}\label{N(T)} N(T) := \sum_{\substack{\rho\\ 0<\gamma \le T}}1 \sim TL,
\qquad \text{where}\quad L:=\frac1{2\pi}\log T, \end{equation}
see \cite[Theorem 9.4]{Titchmarsh2}.
Thus we see that the average vertical spacing between zeros is $1/L$.
By counting zeros with multiplicity we mean that, for example, a triple zero is counted three times in the sum.
Accordingly, we define the pair correlation counting function $N(T,\lambda)$ by
\begin{equation}\label{N(T,U)} N(T,\lambda) := 
\sum_{\substack{\rho,\rho' \\ 0<\gamma,\gamma'\le T \\ 0<(\gamma-\gamma')L \le \lambda}} 1,
\end{equation}
which counts the number of pairs of zeros $\rho=\beta+i\gamma$ with $0<\gamma\le T$, where the height of the second zero is greater than the first by at most $\lambda/L$.
In \eqref{N(T,U)}, we have altered the definition (1.3) in \cite{GLSS1} to avoid using the parameter $U$. This is simply done by replacing $U$ in \cite{GLSS1} with $\lambda/L$.
Montgomery's \cite{Montgomery73} pair correlation conjecture is as follows.
\begin{PCC}\label{PCC} For $\lambda>0$, then
\begin{equation}\label{PCCresult}
N(T,\lambda) = TL \int_0^{\lambda} \left(1 - \left(\frac{\sin\pi\alpha}{\pi \alpha}\right)^2\right) d\alpha + o(TL), \qquad \text{as}\qquad T\to \infty,
\end{equation}
uniformly in each interval $0 < \lambda_0 \le \lambda \le \lambda_1 <\infty$. 
\end{PCC}

Montgomery \cite{Montgomery73} also showed how to apply his approach to obtain that almost all zeros $\rho$ of $\zeta(s)$ are simple.
More precisely, if we let $m_\rho$ denote the multiplicity of the zero $\rho$ and define
\begin{equation}\label{N*} N^*(T) := \sum_{\substack{\rho\\ 0<\gamma\le T}} m_\rho = \sum_{\substack{\rho\ \text{distinct} \\ 0<\gamma \le T}} m_\rho^2, \end{equation}
then the following holds conjecturally:
\begin{equation}\label{SMCeq}
N^*(T) = TL+o(TL), \qquad \text{as}\quad T\to\infty.
\end{equation}
We refer to this as the Simple Multiplicity Conjecture (SMC).
Note that in the last summation in \eqref{N*}, each zero $\rho$ is only counted once, whereas in the first sum $\rho$ is counted with multiplicity, hence a triple zero gets counted nine times in either of the sums in \eqref{N*}.
Gallagher and Mueller \cite{GaMu78} proved under RH that \hyperref[PCC]{\bf PCC} implies SMC by taking their sums to be over only the critical zeros $1/2+i\gamma$.
Their method, however, does not actually depend on RH, as clarified in \cite{GLSS1}.

In addition to \eqref{SMCeq}, \hyperref[PCC]{\bf PCC} also implies the following repulsion between zeros:
\begin{equation}\label{repulsion}
N(T,\lambda_0)\ll \lambda_0TL + o(TL) = o(TL), \qquad \text{if}\quad \lambda_0\to 0 ~\text{ as }~ T\to \infty.
\end{equation}
The two properties \eqref{SMCeq} and \eqref{repulsion} together are referred to as \lq\lq Essential Simplicity" of the zeros $\rho$, see Mueller \cite{Mue83}.
In general situations when RH may not be true, we need to slightly alter this notion of {\it Essential Simplicity}.
This is done in \cite{GLSS1} as follows.
\begin{ESH}\label{ESH}
Let
\begin{equation}\label{N_circledast}
N^\circledast(T) := \sum_{\substack{\rho, \rho'\\ 0<\gamma,\gamma' \le T\\ \gamma = \gamma'}} 1.
\end{equation}
Then we have
\begin{align}
& N^\circledast(T) = TL+o(TL)\quad
\text{as $T\to \infty$, and}
\label{ES1}\tag{\textbf{ES1}} \\
& N(T,\lambda_0) = o(TL)
\quad
\text{if $\lambda_0 \to 0$ as $T \to \infty$.}
\label{ES2}\tag{\textbf{ES2}}
\end{align}
\end{ESH}

We remark that \hyperlink{ESH}{\bf ESH} is equivalent to 
\[\label{ES}\tag{\textbf{ES}}
    N^\circledast(T) + 2 N(T,\lambda_0) = TL+o(TL)
    \quad
    \text{if $\lambda_0 \to 0$ as $T \to \infty$.}
\]
In fact, if we assume \eqref{ES},
then the fact that
\begin{equation}\label{N_cir_star_lower}
N^\circledast(T)\ge N^*(T) \ge N(T) = TL+o(TL),
\end{equation}
implies that $N^\circledast(T) = TL+o(TL)$ as $T\to\infty$, which is \eqref{ES1}, and that forces $N(T,\lambda_0)=o(TL)$ as $T\to\infty$, which is \eqref{ES2}.
In \cite{GLSS1}, we showed that \hyperref[PCC]{\bf PCC} implies \hyperref[ESH]{\bf ESH}, which then implies that asymptotically 100\% of the zeros $\rho$ are simple and on the critical line.

\medskip

In this paper our goal is to replace \hyperref[PCC]{\bf PCC} by an appropriate form of the Alternative Hypothesis (AH) for zeros of $\zeta(s)$. The main reason for doing this is that the Gallagher-Mueller method we use as our main tool does not depend on RH, and therefore we can also remove RH as an assumption in our results on AH.

The Alternative Hypothesis has been studied by many authors \cite{ConRH,FGL14,Bal16,LR20,BGST-AH}. In particular, Baluyot \cite{Bal16} provides an extensive and elaborate exposition on the subject.
AH first arose as a consequence of the possible existence of Landau-Siegel zeros of real Dirichlet $L$-functions, as demonstrated by Heath-Brown \cite{Heath-Brown96}.
To describe this briefly, a sequence of Landau-Siegel zeros forces the existence of infinitely many extremely long intervals $(a,b]$ where zeros $\rho=\beta+i\gamma$ with $a<\gamma\le b$ all satisfy $\beta=1/2$, are simple, and are spaced at nearly integer multiples of half the average spacing.
Regardless of the existence of Landau-Siegel zeros, these hypothetical properties of the zeros of $\zeta(s)$ could still hold. This hypothesis is referred to as the {\it Alternative Hypothesis} (AH).
On the other hand, experimental evidence \cite{MO,Odl87} on the vertical distribution of zeros supports \hyperref[PCC]{\bf PCC} and contradicts AH.

Our starting point is the following formulation of AH from \cite{BGST-AH}.
\begin{AH_P}\label{AH_GLSS}
Let $M$ be any positive real number and
\begin{equation}\label{P(T,M)} \mathcal{P}(T,M) := \left\{ (\gamma,\gamma') : \frac{T}{\log^2\!T} < \gamma,\gamma'\le T,~ \left|(\gamma-\gamma')L\right| \le M\right\}. \end{equation}
Then for every $(\gamma,\gamma') \in \mathcal{P}(T,M)$ there is an integer $k$ such that
\begin{align}\label{AH2k}
(\gamma-\gamma')L = \frac{k}{2} + O\left((|k|+1) R(T)\right), \tag{\textbf{AH0}}
\end{align}
for a positive decreasing function $R(T)$ such that $R(T) \to 0$ as $T\to \infty$.
\end{AH_P}
\noindent
Note that the range of zeros in \eqref{P(T,M)} is $T/\log^2\!T< \gamma \le T$ instead of the usual range $0<\gamma \le T$.
We explain this in Section \ref{sec2} and also show that it does not affect any of our estimates.

We now define for each integer $k$ satisfying $|k/2|\le M$,
\begin{equation} \label{Bk/2}
\mathcal{B}_{k/2}(T) := \left\{(\gamma,\gamma') \in \mathcal{P}(T,M): \frac{k}2 - \frac14 < (\gamma-\gamma')L \le \frac{k}2 + \frac14\right\},
\end{equation}
which, for sufficiently large $T$, is well-defined and partitions $\mathcal{P}(T,M)$ over $|k|\le 2M$.
Note that when $|k|>2M+1/2$, then $\mathcal{B}_{k/2}(T)$ is the empty set.
We count the \lq\lq density" of these pairs closest to $k/2$ with\footnote{Note that for all $k\in\mathbb{Z}$ we have $P_{k/2} = P_{-k/2}$.}
\begin{equation}\label{Pk/2} P_{k/2}(T) := (TL)^{-1}|\mathcal{B}_{k/2}(T)|. \end{equation}

By the unconditional estimate from \cite[Lemma 9]{GM87}, we have that, for $0\le h\le T$,  
\begin{equation}\label{GMbound}
\sum_{\substack{\rho,\rho' \\ 0<\gamma,\gamma'\le T \\ |(\gamma-\gamma')L|\le h}}1 \le \sum_{\substack{\rho,\rho' \\ 0<\gamma\le T \\ |(\gamma-\gamma')L|\le h}}1 \ll (1+h)TL,
\end{equation}
we have immediately on taking $h=M$ that 
\[\sum_{|k|\le 2M}|\mathcal{B}_{k/2}(T)|= |\mathcal{P}(T,M)|\ll MTL.\]
Thus we conclude
\begin{equation} \label{P_k/2sumbound}\sum_{|k|\le 2M} P_{k/2}(T) \ll M, \end{equation}
and the densities are unconditionally bounded for any fixed $M$, and the average of the densities are also bounded when $M\to \infty$. 
Assuming RH, Baluyot et al. \cite[Theorem 1]{BGST-AH} proved a more precise result on these densities.
\begin{theorem*}
Assuming RH and \hyperref[AH_GLSS]{\bf AH-Pairs}, we have as $T\to\infty$,
\begin{equation}\label{thm1a} 1+o(1) \le P_0(T) \le \frac32 - \frac{2}{\pi^2} + o(1) = 1.29735\ldots + o(1), \end{equation}
and for $k\in \mathbb{Z}\setminus\{0\}$,
\begin{equation}\label{thm1b} P_{k/2}(T) \sim \begin{cases}
\displaystyle P_0(T)-\frac12, & \text{if $k\neq 0$ is even,} \\[2mm]
\displaystyle \frac32-\frac{2}{\pi^2k^2}-P_0(T), & \text{if $k$ is odd.}
\end{cases}
\end{equation}
\end{theorem*}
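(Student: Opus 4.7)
The plan is to combine Montgomery's pair correlation identity under RH with the half-integer spacing structure imposed by AH-Pairs, and to extract each $P_{k/2}(T)$ as a Fourier-type coefficient of the test-function pairing. Under RH, Montgomery's explicit-formula machinery yields
\[
\sum_{0<\gamma,\gamma'\le T} r((\gamma-\gamma')L)\,w(\gamma-\gamma') \;\sim\; TL\left(\hat r(0) + \int_{-1}^{1} |\alpha|\,\hat r(\alpha)\,d\alpha\right)
\]
for every Schwartz function $r$ whose Fourier transform $\hat r$ is supported in $[-1,1]$; here $w(u)=4/(4+u^2)$, and the constant $\hat r(0)$ appears because the spike $T^{-2|\alpha|}\log T$ in Montgomery's $F(\alpha,T)$ acts like a nascent Dirac delta at $\alpha=0$. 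Under AH-Pairs, pairs in $\mathcal P(T,M)$ satisfy $(\gamma-\gamma')L = k/2 + O((|k|+1)R(T))$, while the Gallagher--Mueller estimate \eqref{GMbound}, applied dyadically, controls pairs with $|(\gamma-\gamma')L|>M$ by $O(TL/M)$ whenever $r(u)=O(1/u^{2})$ at infinity. Choosing such an $r$ and letting $M\to\infty$ so that $MR(T)\to 0$, we obtain the master identity
\[
\sum_{k\in\mathbb{Z}} r(k/2)\,P_{k/2}(T) \;=\; \hat r(0) + \int_{-1}^{1}|\alpha|\,\hat r(\alpha)\,d\alpha + o(1).
\]

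The key test functions are the shifted sinc-squareds
\[
r_{k_0}(u):=\left(\frac{\sin\pi(u-k_0/2)}{\pi(u-k_0/2)}\right)^{2},\qquad k_0\in\mathbb{Z},
\]
for which $\hat r_{k_0}(\alpha)=\max(1-|\alpha|,0)\,e^{-i\pi k_0\alpha}$ is supported in $[-1,1]$ and the sample values are $r_{k_0}(k_0/2)=1$, $r_{k_0}(k/2)=0$ for $k\ne k_0$ with $k\equiv k_0\!\!\pmod 2$, and $r_{k_0}(k/2)=4/(\pi(k-k_0))^{2}$ for $k$ of opposite parity to $k_0$. Using the known Fourier cosine coefficients of $|\alpha|$ and $|\alpha|^{2}$ on $[-1,1]$, the right-hand side of the master identity evaluates to $4/3$ when $k_0=0$, to $1$ when $k_0$ is odd, and to $1-4/(\pi k_0)^{2}$ when $k_0$ is a nonzero even integer. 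A direct substitution, together with the classical series evaluations $\sum_{k\ge 1,\,k\text{ odd}} k^{-2}=\pi^{2}/8$ and $\sum_{k\ge 1,\,k\text{ odd}} k^{-4}=\pi^{4}/96$, shows that the proposed formulas \eqref{thm1b} satisfy every relation in the resulting infinite linear system for any choice of $P_0(T)$. Note that the system is cross-coupled: relations at even $k_0$ express an even-index density in terms of odd-index ones and vice versa, and it is this structural pairing that forces the two cases in \eqref{thm1b}. A Paley--Wiener completeness argument, combined with the a priori bound $\sum_{|k|\le 2M}P_{k/2}(T)\ll M$ from \eqref{P_k/2sumbound}, then shows these are the only admissible one-parameter family of solutions.

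The inequalities \eqref{thm1a} come from non-negativity of the densities. The lower bound $P_0(T)\ge 1+o(1)$ is immediate from $|\mathcal B_0(T)|\ge N^{\circledast}(T)\ge N(T)\sim TL$ (cf.\ \eqref{N_cir_star_lower}), while the upper bound $P_0(T)\le 3/2-2/\pi^{2}+o(1)$ follows by applying $P_{1/2}(T)\ge 0$ to the formula for odd $k=1$, namely $0\le P_{1/2}(T)\sim 3/2-2/\pi^{2}-P_0(T)$; odd $k\ne\pm 1$ gives a strictly weaker bound, and even $k$ gives only $P_0(T)\ge 1/2$. The main obstacle I anticipate is the uniqueness half of the derivation---showing that the shifted sinc-squared relations, despite spanning only a proper subspace of $\mathrm{PW}[-1,1]$, are nonetheless sufficient to pin down the density sequence $(P_{k/2}(T))_k$ up to the single degree of freedom $P_0(T)$ permitted by the unresolved ``large-spacing'' contributions that are not controlled by AH-Pairs alone.
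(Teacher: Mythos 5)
This theorem is not proved in the paper under review: it is quoted from Baluyot--Goldston--Suriajaya--Turnage-Butterbaugh \cite[Theorem 1]{BGST-AH}, and the present paper only uses it as motivation. The paper itself says only that the result is obtained ``by applying Montgomery's theorem for his function $F(\alpha)$'' and does not reproduce the argument, so the comparison is between your sketch and that cited source.

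That said, your sketch captures exactly the right machinery (Montgomery's $F(\alpha)$ theorem plus the AH reduction of the pair sum to a sum over half-integer bins), and the computations you carry out are correct. I checked the right-hand side values $4/3$, $1$, and $1 - 4/(\pi k_0)^{2}$ for $k_0 = 0$, odd, and even nonzero respectively, and I checked that the claimed densities satisfy all three families of relations via $\sum_{j\ge 1}(2j-1)^{-2}=\pi^{2}/8$, $\sum_{j\ge 1}(2j-1)^{-4}=\pi^{4}/96$, and the (perhaps surprising) identity $\sum_{i\in\mathbb Z}\frac{1}{(2i-1)(2i+2m-1)}=0$ for $m\ne 0$. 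Your lower bound $P_0(T)\ge 1+o(1)$ via $|\mathcal B_0(T)|\ge N^{\circledast}(T)+o(TL)$ is also fine (you need a small correction for the truncation $T/\log^2 T<\gamma$, but \eqref{switch} handles it).

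The genuine gap is the one you flag yourself: you have verified that the proposed one-parameter family solves the infinite linear system, but you have not derived it, and the upper bound in \eqref{thm1a} inherits this problem because $P_0(T)\le 3/2-2/\pi^2+o(1)$ follows from the $k=1$ case of \eqref{thm1b} plus $P_{1/2}\ge 0$, not from any single test-function relation directly (the $k_0=0$ relation only gives $P_0\le 4/3$). Your Paley--Wiener heuristic is the right intuition, but a naive completeness argument fails because the sequence $q_k=(-1)^k$ pairs to a distribution supported at $\alpha=\pm 1$, precisely where Montgomery's theorem loses control, and this produces the genuine one-parameter slack. To close the gap you should set $x_m:=P_m-(P_0-\tfrac12)$ for $m\ne 0$ (with $x_0=0$) and $y_m:=P_{m-1/2}-(\tfrac32-\tfrac{2}{\pi^2(2m-1)^2}-P_0)$; the odd and even test-function relations then become $y=-a*x$, $x=-b*y$ with $a_j=4/\pi^2(2j+1)^2$, $b_i=4/\pi^2(2i-1)^2$, whence $x=(a*b)*x$ where $c:=a*b$ is a symmetric, aperiodic, mean-zero probability distribution on $\mathbb Z$. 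A Choquet--Deny/Liouville argument forces bounded solutions to be constant; mean-zero admits linear solutions $x_m=Am$, but the symmetry $P_{-k/2}=P_{k/2}$ forces $x_{-m}=x_m$ and kills the linear term, and then $x_0=0$ kills the constant. You also need to upgrade the a priori bound $P_{k/2}(T)\ll |k|+1$ to something that justifies this Liouville step and the absolute convergence of all the bilinear pairings; none of this is free, and it is the substantive content you would still need to supply.
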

We now define the ``limiting densities" $p_{k/2}$ by
\begin{equation} \label{pk/2}
p_{k/2} := \lim_{T\to\infty} P_{k/2}(T), \qquad \text{provided this limit exists},
\end{equation}
and on RH we see by \eqref{thm1b} all these limiting densities exist provided $p_0$ exists. Our first result in this paper connects \hyperref[AH_GLSS]{\bf AH-Pairs} with \hyperref[ESH]{\bf ESH}.

\begin{theorem}\label{p0thm}
Assuming \hyperref[AH_GLSS]{\bf AH-Pairs}, we have that $p_0=1$ is equivalent to \hyperref[ESH]{\bf ESH}.
\end{theorem}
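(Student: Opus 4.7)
The overall plan is to identify $TL\cdot P_0(T)$ with $N^\circledast(T)+2N(T,\lambda_0)$ up to $o(TL)$ for some $\lambda_0=\lambda_0(T)\to 0$: by the reformulation \eqref{ES} of \hyperref[ESH]{\bf ESH}, the equivalence with $p_0=1$ then becomes essentially automatic. The key input is that \hyperref[AH_GLSS]{\bf AH-Pairs} forces every $(\gamma,\gamma')\in \mathcal{B}_0(T)$ to lie \emph{much} closer to $0$ than the defining bound $1/4$. Indeed, applying \eqref{AH2k} with $M=1$ to such a pair gives $(\gamma-\gamma')L=k/2+O((|k|+1)R(T))$ for some integer $k$, and the constraint $|(\gamma-\gamma')L|\leq 1/4$ together with $R(T)\to 0$ forces $k=0$ for $T$ large. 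Hence there is an absolute constant $C_0>0$ such that $|(\gamma-\gamma')L|\leq C_0R(T)$ for every $(\gamma,\gamma')\in \mathcal{B}_0(T)$ once $T$ is sufficiently large.

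Setting $\lambda_0:=C_0R(T)\to 0$ and partitioning $\mathcal{B}_0(T)$ by the sign of $(\gamma-\gamma')L$ yields
\[
|\mathcal{B}_0(T)| = N^\circledast_*(T) + 2N_*(T,\lambda_0),
\]
where the subscript $*$ denotes restriction of each count to $T/\log^2\!T<\gamma,\gamma'\leq T$. Removing the restrictions costs only $o(TL)$: the omitted pairs have at least one of $\gamma,\gamma'$ in $(0,T/\log^2\!T]$, and since $\lambda_0/L=o(1)$ the other coordinate is confined to essentially the same range, so \eqref{GMbound} applied with $T$ replaced by $T/\log^2\!T$ bounds the total by $O((T/\log^2\!T)\log T)=o(TL)$. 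Therefore
\begin{equation*}
TL\cdot P_0(T) = N^\circledast(T)+2N(T,\lambda_0)+o(TL). \tag{$\star$}
\end{equation*}

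The equivalence now reads off from $(\star)$. If $p_0=1$, then $N^\circledast(T)+2N(T,\lambda_0)=TL+o(TL)$, which, combined with $N^\circledast(T)\geq N(T)=TL+o(TL)$ from \eqref{N_cir_star_lower}, forces both \eqref{ES1} and $N(T,\lambda_0)=o(TL)$. For an arbitrary $\lambda_0'(T)\to 0$ and $T$ large, any restricted pair with $0<(\gamma-\gamma')L\leq \lambda_0'<1/4$ must satisfy $(\gamma-\gamma')L\leq C_0R(T)$ by the same clustering argument, so $N(T,\lambda_0')\leq N(T,\lambda_0)+o(TL)=o(TL)$, giving \eqref{ES2}. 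Conversely, if \hyperref[ESH]{\bf ESH} holds, apply \eqref{ES2} with the particular choice $\lambda_0=C_0R(T)$ and substitute into $(\star)$ to obtain $P_0(T)\to 1$. The only mildly delicate step is the range-switching bookkeeping in the second paragraph; the conceptual content lies in the opening observation that \hyperref[AH_GLSS]{\bf AH-Pairs} converts ``pairs near $0$'' into ``essentially coincident pairs''.
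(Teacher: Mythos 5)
Your proof is correct and follows essentially the same route as the paper's: use \hyperref[AH_GLSS]{\bf AH-Pairs} to show $\mathcal{B}_0(T)$ consists only of pairs with $|(\gamma-\gamma')L|\le C_0R(T)$, range-switch to rewrite $P_0(T)\,TL$ as $N^\circledast(T)+2N(T,C_0R(T))+o(TL)$, and then read off the equivalence using \eqref{N_cir_star_lower} and the reformulation \eqref{ES}. The paper's converse direction is marginally slicker—it dispenses with re-invoking the clustering step by simply observing that for any $\lambda_0<1/4$ the pair count is monotonically bounded by the count at cutoff $1/4$, which is $P_0(T)TL+O(T/L)$—but the content is identical.
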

We also know from \cite[Theorem 2]{GLSS1}, that assuming \eqref{ES1} of \hyperref[ESH]{\bf ESH}, then asymptotically 100\% of the zeros are simple and on the critical line, which immediately gives the following result.
\begin{corollary}\label{cor1} Assuming \hyperref[AH_GLSS]{\bf AH-Pairs}, if $p_0=1$, then asymptotically 100\% of the zeros of $\zeta(s)$ are simple and on the critical line.
\end{corollary}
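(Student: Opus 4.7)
The plan is straightforward chaining of two already-available results, so the proof essentially writes itself; I outline it here as a sketch rather than a calculation.

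First I would invoke Theorem \ref{p0thm}, which, under the standing assumption \hyperref[AH_GLSS]{\textbf{AH-Pairs}}, gives the equivalence between the hypothesis $p_0 = 1$ and the full \hyperref[ESH]{\textbf{ESH}}. Thus from $p_0 = 1$ I obtain in particular the component \eqref{ES1}, namely
\[
N^{\circledast}(T) = TL + o(TL) \qquad \text{as } T \to \infty.
\]
In view of the sandwich \eqref{N_cir_star_lower}, this forces the equality $N^{*}(T) = TL + o(TL)$ as well, which is the simple multiplicity statement for the zeros of $\zeta(s)$.

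Next I would quote Theorem 2 of \cite{GLSS1}, which takes \eqref{ES1} as its hypothesis and concludes that asymptotically $100\%$ of the non-trivial zeros $\rho = \beta + i\gamma$ are simple and satisfy $\beta = 1/2$. Plugging in the \eqref{ES1} just obtained delivers exactly the conclusion of the corollary. Note that no appeal to RH is needed, because neither Theorem \ref{p0thm} nor the cited Theorem 2 of \cite{GLSS1} assumes RH; this is precisely the point emphasized in the introduction, and the Gallagher--Mueller machinery underlying the second ingredient has been re-examined in \cite{GLSS1} so as to be applied to all non-trivial zeros rather than only to zeros on the critical line.

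There is no real obstacle: the only non-trivial work has already been done in Theorem \ref{p0thm} and in Theorem 2 of \cite{GLSS1}. The sole thing to verify is that the two statements interface correctly, i.e. that \eqref{ES1} extracted from \hyperref[ESH]{\textbf{ESH}} is the exact hypothesis needed for the GLSS1 result; this is immediate from the definitions of $N^{\circledast}(T)$ and $N^{*}(T)$ in \eqref{N_circledast} and \eqref{N*}, together with the elementary inequality $N^{\circledast}(T) \ge N^{*}(T)$ that was already used to derive \eqref{ES} from \eqref{ES1}.
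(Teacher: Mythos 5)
Your argument is exactly the paper's: apply Theorem~\ref{p0thm} to pass from $p_0=1$ (under \hyperref[AH_GLSS]{\textbf{AH-Pairs}}) to \hyperref[ESH]{\textbf{ESH}}, in particular to \eqref{ES1}, and then cite Theorem~2 of \cite{GLSS1}, which derives from \eqref{ES1} that asymptotically $100\%$ of the zeros are simple and on the critical line. This matches the paper's one-line justification preceding Corollary~\ref{cor1}; your extra remark about $N^{*}(T)$ via the sandwich \eqref{N_cir_star_lower} is harmless but not needed.
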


If $p_0=1$ we obtain from \eqref{thm1b} and RH that
\begin{equation}\label{RHdensity} p_{k/2} \sim \begin{cases}
\displaystyle \frac12, & \text{if $k\neq 0$ is even}, \\[2mm]
\displaystyle \frac12-\frac{2}{\pi^2k^2}, & \text{if $k$ is odd}.
\end{cases}
\end{equation}

Our use of RH in \eqref{thm1b} and \eqref{RHdensity} is required because we have obtained these results by applying Montgomery's theorem for his function $F(\alpha)$ \cite{Montgomery73}. This, together with $p_0=1$, provides enough information to completely determine $p_{k/2}$ for all $k$. 

In this paper we eliminate the use of RH and replace it with the method of Gallagher and Mueller we used in our earlier paper \cite{GLSS1}. The method only obtains asymptotic formulas when we take long averages over the densities from \hyperref[AH_GLSS]{\bf AH-Pairs}. Our main result is as follows. 
\begin{theorem}\label{thm2} Assuming \hyperref[AH_GLSS]{\bf AH-Pairs}, then for any sufficiently large even integer $M$ and $T \to \infty$, we have
\begin{equation}\begin{split} \label{mainthm} 2\sum_{j=1}^{M}& \left(M-j\right)\left(P_{j-1/2}(T) + P_j(T)-\left(1-\frac{2}{\pi^2(2j-1)^2}\right) \right)\\& = \left(\frac32 -P_0(T)\right)M -\sum_{j=1}^M P_{j-1/2}(T) + O\left(\sqrt{\log M}\right) + O\left(M^2\left(R(T)+\frac1{L^2}\right)\right). \end{split}
\end{equation}
\end{theorem}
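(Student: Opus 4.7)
The plan is to adapt the Gallagher--Mueller method used in \cite{GLSS1} -- which yields an unconditional Parseval-type identity -- to the \hyperref[AH_GLSS]{\bf AH-Pairs} setting. Concretely, I would apply the Gallagher--Mueller identity to a test function $r_M$ that interpolates the Fejer-type weights $r_M(k/2) = (M - \lceil |k|/2 \rceil)_+$ at the half-integer lattice; such an $r_M$ can be realized as the autocorrelation of a piecewise-constant function on $[0, M]$. The identity then takes the shape
\[
\sum_{\substack{\rho, \rho' \\ 0 < \gamma, \gamma' \le T}} r_M\bigl((\gamma - \gamma') L\bigr) = TL \cdot I(r_M) + O\bigl(M^2 TL / L^2\bigr),
\]
where $I(r_M)$ is an explicit linear functional computed via the Fourier side, and the error is controlled unconditionally by \eqref{GMbound} applied to the tails of $r_M$ and to the zeros $\gamma \le T/\log^2 T$ excluded from $\mathcal{P}(T, M)$.

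Next, \hyperref[AH_GLSS]{\bf AH-Pairs} discretizes the left-hand side: since $(\gamma - \gamma') L = k/2 + O((|k|+1) R(T))$ and $r_M$ is Lipschitz with constant $O(1)$, summing the Lipschitz cost against $|\mathcal{B}_{k/2}(T)|$ over $|k| \le 2M$ gives
\[
\sum_{\substack{\rho, \rho' \\ 0 < \gamma, \gamma' \le T}} r_M\bigl((\gamma - \gamma') L\bigr) = M \cdot N^\circledast(T) + 2\,TL \sum_{j=1}^{M}(M - j)\bigl(P_{j-1/2}(T) + P_j(T)\bigr) + O\bigl(M^2 TL \cdot R(T)\bigr).
\]
The diagonal term $M \cdot N^\circledast(T)$ is the $k=0$ contribution (since $r_M(0) = M$), and combining \eqref{ES} with the bound \eqref{P_k/2sumbound} lets us express $N^\circledast(T)/(TL)$ in terms of $P_0(T)$, producing the piece $(\tfrac{3}{2} - P_0(T))M$ on the right-hand side of \eqref{mainthm}.

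The heart of the proof is then the explicit computation of $I(r_M)$. The autocorrelation structure of $r_M$ together with Parseval reduces this to a sum over the Fourier coefficients of the generating sequence. The constants $2/(\pi^2(2j-1)^2)$ emerge as the Fourier coefficients of a 1-periodic sawtooth function (equivalently, from the classical identity $\sum_{j \ge 1} (2j-1)^{-2} = \pi^2/8$ -- this is precisely the mechanism that produces the limiting densities \eqref{RHdensity} under the hypothesis $p_0 = 1$). Grouping contributions by the parity of $k$ yields exactly the structure $2(M - j)\bigl(1 - 2/(\pi^2(2j-1)^2)\bigr)$ that appears on the left-hand side of \eqref{mainthm}, while the residual $-\sum_{j=1}^M P_{j-1/2}(T)$ on the right-hand side records the half-unit offset between odd-$k$ and even-$k$ lattice weights and is extracted by Abel summation on the Fourier side.

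The hardest step will be establishing the $O(\sqrt{\log M})$ error. This arises from truncating the Fourier series of the relevant piecewise-linear function at frequency $\le M$: the coefficients decay only as $(2j-1)^{-2}$, so a naive $L^1$ bound on the tail would give $O(\log M)$. Achieving $O(\sqrt{\log M})$ requires a careful second-moment argument, pairing the tail against the Dirichlet kernel restricted to the half-integer lattice and applying Cauchy--Schwarz, which replaces one factor of $\log M$ by its square root. Combined with the \hyperref[AH_GLSS]{\bf AH-Pairs} discretization error $O(M^2 R(T))$ and the unconditional Gallagher--Mueller error $O(M^2/L^2)$ -- the latter absorbing both the truncation to $\gamma > T/\log^2 T$ and \eqref{GMbound} for pairs close to the edge $\gamma \sim T$ -- this produces the complete error term in \eqref{mainthm}.
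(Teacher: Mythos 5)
Your high-level skeleton matches the paper's: take a Fej\'er-type (triangle) weight, evaluate the resulting sum over zero pairs two ways (via \hyperref[AH_GLSS]{\bf AH-Pairs} on one side, via the Gallagher--Mueller moment machinery on the other), and equate. Indeed the paper simply sets $\lambda=M$ in $\Ss(T,\lambda)$ from \eqref{Dsec2a}, which already has the triangle weight $M/L-|\gamma-\gamma'|$ built in, so there is no need to construct a separate $r_M$; and note that the weight at $k/2$ is $M-|k|/2$, not $(M-\lceil |k|/2\rceil)_+$, which is precisely what produces the leftover $\sum_{j=1}^M P_{j-1/2}(T)$ on the right-hand side of \eqref{mainthm} (the half-unit offset on odd $k$). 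However, two of your key steps would fail as written.

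First, your account of the $O(\sqrt{\log M})$ error is wrong in a way that matters. You propose to obtain it by truncating the Fourier series of the test function at frequency $\le M$ and beating the $O(\log M)$ $L^1$ tail bound by a Cauchy--Schwarz/second-moment trick. In fact the $O(\sqrt{\log M})$ term is not a byproduct of test-function analysis at all: it is the error term in the \emph{unconditional} Fujii--Tsang estimate $\int_0^T(S(t+\lambda/L)-S(t))^2\,dt = (T/\pi^2)\log(2+\lambda) + O\bigl(T\sqrt{\log(2+\lambda)}\bigr)$, quoted as Proposition~\ref{prop2} and combined with Proposition~\ref{prop1} in \eqref{Dsec2b}. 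That input is a deep moment estimate coming from Selberg's method; no amount of cleverness with the Fej\'er kernel's Fourier tail would reproduce it, and a proof that tries to derive it from scratch is missing the actual mechanism.

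Second, you invoke \eqref{ES} to rewrite the diagonal contribution $M\cdot N^\circledast(T)$ in terms of $P_0(T)$, thereby ``producing the piece $(\tfrac32-P_0(T))M$.'' But \eqref{ES} (equivalently \hyperref[ESH]{\bf ESH}) is \emph{not} an assumption of Theorem~\ref{thm2}; it is the target one eventually approaches via Theorem~\ref{p0thm} and Corollary~\ref{cor2}, so it cannot be used here. In the paper's argument $N^\circledast(T)$ never appears: the $k=0$ contribution is simply $\bigl(M+O(R(T))\bigr)P_0(T)\,TL$, read off from $|\mathcal{B}_0(T)|=P_0(T)\,TL$, and the $\tfrac32 M$ arises on the \emph{other} side of the identity from the elementary rearrangement $2\sum_{j=1}^M(M-j)\bigl(1-2/(\pi^2(2j-1)^2)\bigr)=M^2-\tfrac32 M+\tfrac{\log M}{\pi^2}+O(1)$ applied to the $M^2+\log M/\pi^2$ coming from \eqref{Dsec2b}. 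Your placement of $N^\circledast$ and ES inside this proof would make the argument circular.
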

We now formulate the simplest model for the densities that is consistent with Theorem \ref{thm2}. By \eqref{P_k/2sumbound}, the right-hand side of \eqref{mainthm} is $\ll M$ as $M\to \infty$ and therefore for the left-hand side of the equation to satisfy this bound it will need to be $o(1)$ as $T\to \infty$ for all except a finite number of terms. The simplest assumption is that this sum vanishes up to an error term $o(1)$ for all terms.
\begin{AH_WD}\label{AH_Weak}
For any sufficiently large $T$, we have for each positive integer $j$,
\begin{align}\label{SumtwoPs}
\displaystyle P_{j-1/2}(T) + P_j(T) = 1-\frac{2}{\pi^2(2j-1)^2} + O(R_P(T)),
\tag{\textbf{AH1}}
\end{align}
where $R_P(T)$ is a positive decreasing function with $R_P(T)\to 0$ as $T\to\infty$, and for any large even integer $M$,
\begin{align}\label{SumHalfk}
\displaystyle \sum_{j=1}^M P_{j-1/2}(T) = \frac{M}2 - \frac14 + O\left(\frac1M\right) + O(MR_P(T)).
\tag{\textbf{AH2}}
\end{align}
Here \eqref{SumHalfk} is uniform in $M$ on any interval $0<m_1\le M\le m_2<\infty$.
\end{AH_WD}
Note that \eqref{SumtwoPs} is obtained immediately from \eqref{thm1b}. Our model for \eqref{SumtwoPs} is that we know the densities $P_{j-1/2}(T) + P_j(T)$ on the intervals $(j+1/2, j+5/4]$, and we also take $P_{-j-1/2}(T) + P_{-j}(T) = P_{j-1/2}(T) + P_j(T)$, but make no assumption on $P_0(T)$. Further, unlike in \eqref{AH2k}, the error in \eqref{SumtwoPs} should not depend on $j$.\footnote{The error term in \eqref{AH2k} measures how close the distance between two zeros is to $k/2$ times the average spacing. We would usually expect about $\asymp k/2$ zeros between these two zeros, and thus if the error between each of the consecutive zeros is $O(R(T))$ then on adding we get an error $O(k R(T))$ for the original pair of zeros. The error term in \eqref{SumtwoPs} is for two consecutive densities, and by \eqref{thm1b} these densities do not grow in $k$ assuming RH.} Thus, on adding these densities, we obtain
\begin{align}\label{averageP_{k/2}}
\sum_{k=1}^{2M}P_{k/2} = \sum_{j=1}^M \left(P_{j-1/2}(T) + P_j(T)\right) &= \sum_{j=1}^M \left(1-\frac{2}{\pi^2(2j-1)^2}\right) + O(MR_P(T)) \notag\\
&= M - \frac14 + O\left(\frac1M\right) + O(MR_P(T)), \qquad \text{as}\quad T\to\infty, 
\end{align}
and therefore we have 
\begin{equation}\label{Sumkk} \sum_{j=1}^M P_j(T) = M - \frac14 - \sum_{j=1}^M P_{j-1/2}(T) + O\left(\frac1M\right) + O(MR_P(T)). 
\end{equation}
On applying \eqref{SumHalfk}
 we obtain
\begin{align} \label{Sumk}
\sum_{j=1}^M P_j(T) = \frac{M}2 + O\left(\frac1M\right) + O(MR_P(T)).
\end{align}
Thus, if we wanted, we could have used \eqref{Sumk} for \eqref{SumHalfk}, and together \eqref{SumHalfk} and \eqref{Sumk} guarantee the existence of equal limiting average densities of $1/2$ on half-integers and also on integers. 

If we substitute \eqref{SumtwoPs} into Theorem \ref{thm2} and use \eqref{Sumk} we immediately obtain the following average form of \eqref{thm1b}.

\begin{theorem} \label{thm3} Assuming \hyperref[AH_GLSS]{\bf AH-Pairs} and \eqref{SumtwoPs}, we have 
\begin{equation} \label{thm3a}\frac32 -P_0(T) = \frac1{M}\sum_{j=1}^M P_{j-1/2}(T) + O\left(\frac{\sqrt{\log M}}{M}\right) + O\left(M\left(R(T)+R_P(T)+ \frac{1}{L^2}\right)\right) \end{equation} and 
\begin{equation} \label{thm3b}
P_0(T)-\frac12 = \frac1{M}\sum_{j=1}^M P_{j}(T) + O\left(\frac{\sqrt{\log M}}{M}\right) + O\left(M\left(R(T)+R_P(T)+ \frac{1}{L^2}\right)\right). \end{equation} 
\end{theorem}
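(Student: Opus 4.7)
The plan is to carry out the direct substitution that the authors flag as immediate. Starting from the identity in Theorem \ref{thm2}, I would invoke \eqref{SumtwoPs} from \hyperref[AH_Weak]{\bf AH-Weak Density} to kill every term in the weighted sum on the left. Since each factor $P_{j-1/2}(T)+P_j(T)-(1-2/(\pi^2(2j-1)^2))$ is $O(R_P(T))$ uniformly in $j$, and the weights $M-j$ sum to $\binom{M}{2}\ll M^2$, the entire left-hand side is absorbed into an error term of size $O(M^2 R_P(T))$. Moving this to the right-hand side of \eqref{mainthm} and dividing through by $M$ yields
\begin{equation*}
\frac{3}{2}-P_0(T)=\frac{1}{M}\sum_{j=1}^M P_{j-1/2}(T)+O\!\left(\frac{\sqrt{\log M}}{M}\right)+O\!\left(M\left(R(T)+R_P(T)+\frac{1}{L^2}\right)\right),
\end{equation*}
which is \eqref{thm3a}.

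To obtain \eqref{thm3b}, I would bypass \eqref{SumHalfk} (since it is not among the hypotheses of Theorem \ref{thm3}) and instead derive the raw partial-sum identity directly from \eqref{SumtwoPs}: summing \eqref{SumtwoPs} over $1\le j\le M$ and using $\sum_{j=1}^\infty 2/(\pi^2(2j-1)^2)=1/4$ with a tail bound of size $O(1/M)$ gives
\begin{equation*}
\sum_{j=1}^M\bigl(P_{j-1/2}(T)+P_j(T)\bigr)=M-\frac14+O\!\left(\frac{1}{M}\right)+O(MR_P(T)).
\end{equation*}
Solving for $\sum_{j=1}^M P_j(T)$ and substituting the expression for $\sum_{j=1}^M P_{j-1/2}(T)$ from the already-established \eqref{thm3a} gives
\begin{equation*}
\sum_{j=1}^M P_j(T)=\left(P_0(T)-\frac{1}{2}\right)M-\frac{1}{4}+O(\sqrt{\log M})+O\!\left(M^2\left(R(T)+R_P(T)+\frac{1}{L^2}\right)\right),
\end{equation*}
and dividing by $M$ produces \eqref{thm3b} after absorbing the $O(1/M)$ contribution into $O(\sqrt{\log M}/M)$.

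There is no real obstacle here — the content is entirely in Theorem \ref{thm2}, and the present result is a two-line rearrangement. The only care needed is bookkeeping: verifying that the $O(M^2 R_P(T))$ error produced by applying \eqref{SumtwoPs} term-by-term (rather than the sharper \eqref{SumHalfk}) matches the $O(M(R(T)+R_P(T)+1/L^2))$ allowed in the conclusion after dividing by $M$, and confirming that the tail of $\sum 1/(2j-1)^2$ beyond $M$ contributes only $O(1/M)$, which is dominated by $O(\sqrt{\log M}/M)$.
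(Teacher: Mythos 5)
Your proof is correct and follows the route the paper intends: substitute \eqref{SumtwoPs} into Theorem \ref{thm2} to collapse the weighted sum on the left into $O(M^2 R_P(T))$, rearrange and divide by $M$ to get \eqref{thm3a}, then sum \eqref{SumtwoPs} to obtain the partial-sum identity (which is exactly the paper's \eqref{Sumkk}) and combine it with \eqref{thm3a} to get \eqref{thm3b}. You are also right to bypass \eqref{SumHalfk}: the paper's remark that one should ``use \eqref{Sumk}'' is misleading, since \eqref{Sumk} depends on \eqref{SumHalfk}, which is not among the hypotheses of Theorem \ref{thm3}; the correct input is \eqref{Sumkk}, derived from \eqref{SumtwoPs} alone, which is what you use.
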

If in Theorem \ref{thm3} we also assume \eqref{SumHalfk}, then by using either \eqref{SumHalfk} in \eqref{thm3a} or \eqref{Sumk} in \eqref{thm3b}, we immediately obtain
\begin{equation} \label{P_0eq} P_0(T) = 1 + O\left(\frac{\sqrt{\log M}}{M}\right)+ O\left(M\left(R(T)+R_P(T)+ \frac{1}{L^2}\right)\right). \end{equation}
Thus
\[ p_0 = \lim_{T\to\infty} P_0(T) = 1 + O\left(\frac{\sqrt{\log M}}{M}\right), \]
and since we can take $M$ as large as we wish, we have obtained $p_0=1$. Thus by Corollary \ref{cor1} we have obtained our main result on \hyperref[AH_GLSS]{\bf AH-Pairs} and \hyperref[AH_Weak]{\bf AH-Weak Density}.
\begin{theorem} \label{thm4} Assuming \hyperref[AH_GLSS]{\bf AH-Pairs} and \hyperref[AH_Weak]{\bf AH-Weak Density}, we have 
$p_0=1$ and asymptotically 100\% of the zeros of $\zeta(s)$ are simple and on the critical line.\end{theorem}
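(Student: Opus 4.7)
The plan is to chain Theorem~\ref{thm2} with both parts of \hyperref[AH_Weak]{\bf AH-Weak Density} and then invoke Corollary~\ref{cor1}, essentially executing the reductions sketched in the paragraphs preceding Theorem~\ref{thm3}. First I would substitute \eqref{SumtwoPs} into the left-hand side of \eqref{mainthm}: each summand $P_{j-1/2}(T)+P_j(T)-(1-\frac{2}{\pi^2(2j-1)^2})$ is $O(R_P(T))$ \emph{uniformly in} $j$, so the weighted sum $2\sum_{j=1}^M (M-j)\cdot O(R_P(T))$ contributes $O(M^2R_P(T))$, which is absorbed into the error already present on the right-hand side. Rearranging yields Theorem~\ref{thm3}. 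The uniformity in $j$ of the error in \eqref{SumtwoPs}, in contrast to the $j$-dependent error in \eqref{AH2k}, is precisely what allows this step to go through.

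Next I would feed \eqref{SumHalfk} into \eqref{thm3a}. Dividing \eqref{SumHalfk} by $M$ produces $\frac{1}{M}\sum_{j=1}^M P_{j-1/2}(T) = \frac{1}{2}+O(M^{-1})+O(R_P(T))$, and substituting into \eqref{thm3a} gives \eqref{P_0eq}, namely
\[P_0(T) = 1 + O\!\left(\frac{\sqrt{\log M}}{M}\right) + O\!\left(M\bigl(R(T)+R_P(T)+L^{-2}\bigr)\right).\]
The delicate point is the order of limits: holding $M$ fixed and sending $T\to\infty$ kills the second error because $R(T), R_P(T), L^{-2}\to 0$, so $p_0 = \lim_{T\to\infty} P_0(T)$ exists and satisfies $p_0 = 1 + O(\sqrt{\log M}/M)$; then letting $M\to\infty$ forces $p_0=1$.

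With $p_0=1$ and \hyperref[AH_GLSS]{\bf AH-Pairs} in hand, Corollary~\ref{cor1}, which packages Theorem~\ref{p0thm} with the simplicity and critical-line consequence of \eqref{ES1} from \cite{GLSS1}, yields that asymptotically $100\%$ of the non-trivial zeros of $\zeta(s)$ are simple and on the critical line. There is no genuine main obstacle here: the heavy lifting was already done in Theorem~\ref{thm2} and in \cite{GLSS1}, and the present theorem reduces to careful bookkeeping of error terms. The only two points requiring care are the $j$-uniformity of \eqref{SumtwoPs} used in step one, and the ordering of the two limits ($T\to\infty$ before $M\to\infty$) in step two.
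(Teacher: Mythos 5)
Your proposal is correct and matches the paper's own route essentially step by step: substitute \eqref{SumtwoPs} into \eqref{mainthm} (using the $j$-uniformity of the error to get $O(M^2 R_P(T))$), obtain \eqref{thm3a}, feed in \eqref{SumHalfk} to reach \eqref{P_0eq}, take $T\to\infty$ then $M\to\infty$ to conclude $p_0=1$, and finish via Corollary~\ref{cor1}. The two points you flag — uniformity of the error in $j$ and the order of limits — are exactly the ones the paper highlights (in the footnote on \eqref{SumtwoPs} and in the discussion after \eqref{P_0eq}).
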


Without using \eqref{SumHalfk}, we cannot determine if the limiting densities in Theorem \ref{thm2} exist. However, we are still able to obtain a partial result on zeros without using \eqref{SumHalfk}.
\begin{corollary}\label{cor2}
Assuming \hyperref[AH_GLSS]{\bf AH-Pairs} and \eqref{SumtwoPs}, we have 
$\limsup_{T\to \infty}P_0(T)\le 3/2$ and asymptotically at least 50\% of the zeros of $\zeta(s)$ are simple and at least 50\% are on the critical line.
\end{corollary}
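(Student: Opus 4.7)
\textbf{Proof proposal for Corollary~\ref{cor2}.} The plan is to extract $\limsup_{T\to\infty} P_0(T)\le 3/2$ directly from Theorem~\ref{thm3}, convert this into an upper bound on $N^\circledast(T)$, and deduce the two $50\%$ statements via a standard height-based counting inequality. Since each $P_{j-1/2}(T)\ge 0$, the right-hand side of \eqref{thm3a} gives
\[
P_0(T) \;\le\; \frac{3}{2} + O\!\left(\frac{\sqrt{\log M}}{M}\right) + O\!\left(M\!\left(R(T)+R_P(T)+\tfrac{1}{L^2}\right)\right).
\]
Fixing a large even integer $M$ and letting $T\to\infty$ kills the second error term, so $\limsup_{T\to\infty}P_0(T) \le 3/2 + O(\sqrt{\log M}/M)$; letting $M\to\infty$ then yields $\limsup_{T\to\infty}P_0(T)\le 3/2$.

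Next I would pass from this density bound to a bound on $N^\circledast(T)$. Every coincident pair $\gamma=\gamma'$ with $T/\log^2\!T < \gamma \le T$ automatically satisfies $(\gamma-\gamma')L=0\in(-1/4,1/4]$ and hence lies in $\mathcal{B}_0(T)$, while the coincident pairs with $\gamma\le T/\log^2\!T$ contribute at most $N^\circledast(T/\log^2\!T)\ll (T/\log^2\!T)\log T = o(TL)$ by \eqref{GMbound} (applied at height $T/\log^2\!T$ with $h=0$). Therefore $N^\circledast(T)\le |\mathcal{B}_0(T)|+o(TL) = P_0(T)\cdot TL + o(TL) \le (3/2+o(1))\,TL$. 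This is the step that requires the most care; the rest is essentially combinatorial bookkeeping.

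Finally, call a height $\gamma>0$ \emph{good} if $m(\gamma)=1$, where $m(\gamma)$ denotes the total multiplicity of zeros of $\zeta(s)$ at height $\gamma$; the reflection $\rho\mapsto 1-\overline{\rho}$ forces the zero at every good height to lie on the critical line and be simple. Letting $G(T)$ count the good heights in $(0,T]$,
\[
N^\circledast(T) - N(T) \;=\; \sum_{\gamma} m(\gamma)\bigl(m(\gamma)-1\bigr) \;\ge\; \sum_{\gamma\ \text{bad}} m(\gamma) \;=\; N(T) - G(T),
\]
so $G(T) \ge 2N(T)-N^\circledast(T) \ge (1/2+o(1))\,TL$. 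Since $N(T)=TL+o(TL)$, asymptotically at least $50\%$ of the zeros of $\zeta(s)$ are simultaneously simple and on the critical line, which implies both of the remaining assertions.
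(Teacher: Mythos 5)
Your proof is correct and reaches the stated conclusion, but the final combinatorial step is genuinely different from the paper's and is in fact tighter. Both proofs obtain $N^\circledast(T)\le(3/2+o(1))TL$ from the $P_0$ bound (your route to that bound is slightly more elementary: you only use that every coincident pair in the range lies in $\mathcal{B}_0(T)$ together with \eqref{GMbound} at height $T/\log^2 T$, rather than the full AH-Pairs identity \eqref{eq:cor1:eq1}). From there the paper invokes the chain $N^*(T)+N^*_{\beta\neq 1/2}(T)\le N^\circledast(T)$ from \cite[Equation (1.10)]{GLSS1}, obtaining the two $50\%$ statements \emph{separately}: $N_s(T)\ge 2N(T)-N^*(T)$ for simplicity, and a bound on $N^*_{\beta\neq 1/2}(T)$ for the critical-line count. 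You instead introduce the height multiplicity $m(\gamma)$ and the good heights where $m(\gamma)=1$, using the identity $N^\circledast(T)-N(T)=\sum_{\gamma}m(\gamma)\bigl(m(\gamma)-1\bigr)\ge\sum_{\gamma\ \text{bad}}m(\gamma)=N(T)-G(T)$ to conclude $G(T)\ge 2N(T)-N^\circledast(T)\ge(1/2+o(1))N(T)$. Since $m(\gamma)=1$ forces (by the reflection $\rho\mapsto 1-\overline{\rho}$) both $\beta=1/2$ and simplicity, this yields the \emph{simultaneous} statement that at least $50\%$ of the zeros are simple \emph{and} on the critical line, which strictly implies the paper's two separate bounds. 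The paper's split via $N^*$ and $N^*_{\beta\neq 1/2}$ keeps bookkeeping consistent with \cite{GLSS1}; your version buys a stronger conclusion with a more self-contained argument.
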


In Section \ref{sec2} we give the proof of Theorem \ref{p0thm}. In Section \ref{sec3} we present the main results from the method of Gallagher and Mueller we require in our proof of Theorem \ref{thm2}. In Section \ref{sec4} we give the proofs of Theorem \ref{thm2} and Corollary \ref{cor2}.

\section{Proof of Theorem \ref{p0thm}}
\label{sec2}

There is one technical detail which we first need to deal with. In \hyperref[AH_GLSS]{\bf AH-Pairs} and the densities defined using this conjecture, we consider zeros in the range $T/\log^2\!T< \gamma \le T$ rather than $0<\gamma \le T$. The reason for this is the Alternative Hypothesis requires we are in a range where the average spacing is $\sim 1/L$ as $T\to \infty$, and this is not true in the latter range. However, we want to use the usual range $0<\gamma \le T$ for counting zeros and defining \hyperref[PCC]{\bf PCC}, and therefore we need to move back and forth between the two ranges with an acceptable error. The estimate we use for this is, for $0\le h\le M$, and any continuous function $f(u)$ on the interval $|u|\le T$ with $0\le f(u) \le f(0)$,
\begin{equation}\label{switch}
\sum_{\substack{\rho,\rho' \\ (\gamma,\gamma')\in \mathcal{P}(T,M)\\ |(\gamma-\gamma')L|\le h} }f(\gamma-\gamma')
= \sum_{\substack{\rho,\rho' \\ 0 <\gamma,\gamma'\le T\\ |(\gamma-\gamma')L|\le h}} f(\gamma-\gamma') + O\left(f(0) \frac{(1+ h)T}{L}\right). \end{equation}

To obtain the error in \eqref{switch}, we first note that
$$ \sum_{\substack{\rho,\rho' \\ (\gamma,\gamma')\in \mathcal{P}(T,M)\\ |(\gamma-\gamma')L|\le h} }f(\gamma-\gamma')
= \sum_{\substack{\rho,\rho' \\ \frac{T}{\log^2\!T}<\gamma,\gamma'\le T \\ |(\gamma-\gamma')L|\le h}} f(\gamma-\gamma'), $$
then
\begin{align*}
\sum_{\substack{\rho,\rho' \\ 0 <\gamma,\gamma'\le T\\ |(\gamma-\gamma')L|\le h}}& f(\gamma-\gamma')
-
\sum_{\substack{\rho,\rho' \\ \frac{T}{\log^2\!T}<\gamma,\gamma'\le T \\ |(\gamma-\gamma')L|\le h}} f(\gamma-\gamma')\\&
=\sum_{\substack{\rho,\rho' \\ 0 <\gamma,\gamma'\le \frac{T}{\log^2\!T}\\ |(\gamma-\gamma')L|\le h}} f(\gamma-\gamma') +O\Bigg(\sum_{\substack{\rho,\rho' \\ \frac{T}{\log^2\!T} -\frac{h}{L} <\gamma,\gamma'\le \frac{T}{\log^2\!T}+\frac{h}{L}\\ |(\gamma-\gamma')L|\le h}} f(\gamma-\gamma')\Bigg)
\\
&\ll f(0)\sum_{\substack{\rho,\rho' \\ 0 <\gamma,\gamma'\le \frac{T}{\log^2\!T}+\frac{h}{L}\\ |(\gamma-\gamma')L|\le h}} 1 
\ll \ f(0)(1+h)\left(\frac{T}{\log^2\!T}+\frac{h}{L}\right)L\ll f(0) \frac{(1+ h)T}{L} ,
\end{align*}
where we used \eqref{GMbound} in the last line and $h/L =o(1)$ since $h\le M$, where $M$ is fixed as $T\to \infty$.

\begin{proof}[Proof of Theorem \ref{p0thm}]
Assuming \hyperref[AH_GLSS]{\bf AH-Pairs}, we have for sufficiently large $T$ that there exists a constant $C_0>0$ with
\begin{equation}\label{eq:cor1:eq1}
\begin{aligned}
    P_0(T)\, TL = |\mathcal{B}_0(T)| 
    &= 
    \sum_{\substack{\rho,\rho' \\ \frac{T}{\log^2\!T}<\gamma,\gamma'\le T \\ -\frac{1}{4} < (\gamma-\gamma')L \le \frac{1}{4}}} 1
    =
    \sum_{\substack{\rho,\rho' \\ \frac{T}{\log^2\!T}<\gamma,\gamma'\le T \\ |(\gamma-\gamma')L| \le C_0R(T)}} 1 \\
    &=
    \sum_{\substack{\rho,\rho' \\ 0<\gamma,\gamma'\le T \\ |(\gamma-\gamma')L| \le C_0R(T)}} 1 + O\left(\frac{(1 + R(T))T}{L}\right) \\
    &=
    N^\circledast(T) + 2N(T,C_0R(T)) + O\left(\frac{T}{L}\right) ,
\end{aligned}
\end{equation}
where we used \eqref{switch}.

If \hyperref[ESH]{\bf ESH} or equivalently \eqref{ES} holds,
then $N^\circledast(T) + 2N(T,C_0R(T)) = TL+o(TL)$ as $T \to \infty$,
hence by \eqref{eq:cor1:eq1},
we have $P_0(T)\, TL = TL + o(TL)$ and thus $p_0=1$.

Conversely, 
if $p_0=1$,
then the left-hand side of Equation \eqref{eq:cor1:eq1} is
$=TL + o(TL)$
so we obtain
\[
    N(T) 
    \le 
    N^\circledast(T) + 2N(T,\lambda_0) 
    = \sum_{\substack{\rho,\rho' \\ 0<\gamma, \gamma'\le T \\ |(\gamma-\gamma')L|\le \lambda_0 }} 1 
    \le
    P_0(T)\, TL +O\left( \frac{T}{L}\right)
    =
    TL + o(TL),
    \quad
    \text{as}~ T \to \infty,
\]
for any $\lambda_0< 1/4$
where the first inequality follows from Equation \eqref{N_cir_star_lower}.
Thus we obtain \eqref{ES}, which
is equivalent to \hyperref[ESH]{\bf ESH}.
\end{proof}

\section{The Method of Gallagher and Mueller}
\label{sec3}

We use the same method as \cite{GLSS1}, following Gallagher and Mueller \cite{GaMu78}. Both propositions below come from \cite[Section 2]{GLSS1}, and the proofs are given in Section 4 of \cite{GLSS1}.
The key idea lies in the estimate of the summation over pairs of zeros
\begin{equation}\label{Dsec2a} \Ss(T,\lambda) := \sum_{\substack{\rho,\rho' \\ 0<\gamma, \gamma'\le T \\ |(\gamma-\gamma')L|\le \lambda }} \left(\frac\lambda{L}-|\gamma-\gamma'|\right),
\end{equation}
for a real number $\lambda >0$, which is closely related to the second moment for zeros in short intervals.
\begin{prop}[Gallagher and Mueller {\cite{GaMu78}}] \label{prop1} For $\lambda>0$, we have, as $T\to\infty$,
\[ \int_0^T \left(N\left(t+\tfrac\lambda{L}\right)-N(t)\right)^2 \, dt
= \Ss(T,\lambda) + O(L^2). \]
\end{prop}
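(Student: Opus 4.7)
The plan is a direct Fubini-style computation. Expanding the square inside the integral,
\[ \bigl(N(t+\lambda/L) - N(t)\bigr)^2 = \sum_{\rho, \rho'} \mathbf{1}[t < \gamma \le t+\lambda/L]\,\mathbf{1}[t < \gamma' \le t+\lambda/L], \]
where the sum is over all ordered pairs of non-trivial zeros of $\zeta(s)$, counted with multiplicity. Integrating over $t \in [0, T]$ and interchanging the order of summation and integration, the inner integral for a fixed pair $(\rho, \rho')$ reduces to the Lebesgue measure of
\[ \bigl[\,\max(\gamma, \gamma') - \lambda/L,\ \min(\gamma, \gamma')\,\bigr) \cap [0, T]. \]

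First I would isolate the ``interior'' pairs, meaning those for which $\lambda/L \le \min(\gamma, \gamma')$ and $\max(\gamma, \gamma') \le T$. For such pairs, the inner integral equals $\lambda/L - |\gamma - \gamma'|$ exactly when $|\gamma - \gamma'| < \lambda/L$, and vanishes otherwise. Summing the interior contributions reproduces $\Ss(T, \lambda)$ on the nose, up to pairs with $\max(\gamma, \gamma') < \lambda/L$, which are empty once $T$ is larger than the height of the lowest zero divided by $1/L$.

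The remaining contributions come from boundary pairs, where at least one of $\gamma, \gamma'$ lies in $(T, T + \lambda/L]$, so the admissible $t$-interval is truncated by the endpoint $T$. By the Riemann--von Mangoldt formula the number of zeros in $(T, T+\lambda/L]$ is $O(\lambda)$; each such zero can be paired with $O(\lambda)$ others within distance $\lambda/L$, and each pair contributes at most $\lambda/L$ to the integral. This produces a boundary correction of size $O(\lambda^3/L)$, comfortably absorbed into the $O(L^2)$ error.

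The main obstacle is essentially bookkeeping: carefully enumerating the three types of boundary pairs (one zero above $T$, both zeros above $T$, and the degenerate case near $0$) and verifying that each contributes within the claimed error. The essential content of the proposition is the Fubini exchange, which realizes the short-interval second moment of $N$ as a weighted pair-correlation sum; the stated error bound is quite generous, and any of the standard unconditional pair-counting estimates (e.g.\ the inequality \eqref{GMbound}) is more than enough to absorb the boundary terms.
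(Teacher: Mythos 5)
Your Fubini computation is exactly the Gallagher--Mueller argument that this proposition refers to (the present paper defers the proof to \cite{GLSS1}, which in turn follows \cite{GaMu78}), so the route matches: expand the square, interchange sum and integral, identify the interior pairs with the terms of $\Ss(T,\lambda)$, and bound the boundary.

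One numerical slip is worth fixing, although it does not affect the conclusion. You assert that the Riemann--von Mangoldt formula gives $O(\lambda)$ zeros with ordinate in $(T,\,T+\lambda/L]$, and likewise $O(\lambda)$ partners within distance $\lambda/L$ of a given zero. What the formula actually gives is $N(T+\lambda/L)-N(T)=\lambda+O(\log T)$, and since $\lambda/L=2\pi\lambda/\log T\to 0$ as $T\to\infty$, the error term $O(\log T)=O(L)$ dominates the nominal main term; unconditionally the number of zeros in a window of bounded length near height $T$ is only $O(\log T)$, with no gain from shrinking the window below length one. With the corrected counts the boundary contribution is $O(L)\cdot O(L)\cdot(\lambda/L)=O(\lambda L)$ rather than $O(\lambda^3/L)$; for $\lambda$ fixed, or more generally $\lambda\ll L$ as used in the paper with $\lambda=M$, this is still $\ll L^2$, so the stated error term survives, but the intermediate bound as you wrote it is not correct.
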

On the other hand, we also have the following unconditional estimate for the above second moment of zeros. 
Recall for $T\ge 2$, the Riemann-von Mangoldt formula
\[
S(T)
:=
\frac1{\pi}{\rm arg }\, \zeta(\tfrac12+iT)
=
N(T)
-
\frac{T}{2\pi} \log \frac{T}{2\pi e}
-
\frac78
+ O\left(\frac{1}{T}\right),
\]
where the term $O(T^{-1})$ is continuous, see Titchmarsh \cite[Theorem 9.3]{Titchmarsh2} or \cite[Corollary 14.2]{MontgomeryVaughan2007}. Here we assume $T\neq \gamma$ for any zero $\rho=\beta+i\gamma$, and we set $N(T)= N(T^+)$ otherwise.
\begin{prop}[Gallagher and Mueller {\cite{GaMu78}}, Fujii {\cite{Fu74,Fu81}}, Tsang {\cite{Tsang84}}]\label{prop2} For $\lambda>0$, we have, as $T\to\infty$,
\begin{equation*}
\int_0^T \left(N\left(t+\tfrac\lambda{L}\right)-N(t)\right)^2 \, dt = \lambda^2T + O\left(\frac{\lambda^2T}L\right) + \int_0^T \left(S\left(t+\tfrac\lambda{L}\right)-S(t)\right)^2 \, dt + O(L^2),
\end{equation*}
and
\begin{equation*}
\int_0^T \left(S\left(t+\tfrac\lambda{L}\right) - S(t)\right)^2\, dt = \frac{T}{\pi^2} \log(2+\lambda) + O\left(T\sqrt{\log(2+\lambda)}\right).
\end{equation*}
\end{prop}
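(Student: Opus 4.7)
The plan is to combine the Riemann--von Mangoldt formula stated in the excerpt with Selberg's representation of $S(t)$ as a short Dirichlet sum over primes: the first formula is essentially an algebraic identity once the cross-term between the smooth main term and the fluctuating $S$-term is shown to cancel, while the second is the classical Selberg--Fujii--Tsang mean-square evaluation. For the first formula, set $M(t) := \tfrac{t}{2\pi}\log\tfrac{t}{2\pi e} + \tfrac{7}{8}$ and write $N(t) = M(t) + S(t) + R(t)$, where $R(t) = O(1/t)$ is the smooth Stirling remainder coming from $\log\Gamma(\tfrac14 + \tfrac{it}{2})$ (so $R'(t) = O(1/t^2)$, $R''(t) = O(1/t^3)$). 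With $h := \lambda/L$ and $\Delta F(t) := F(t+h) - F(t)$, expand $(\Delta N)^2 = (\Delta M)^2 + (\Delta S)^2 + (\Delta R)^2 + 2\Delta M\cdot\Delta S + 2\Delta M\cdot\Delta R + 2\Delta S\cdot\Delta R$. Since $M'(t) = \tfrac{1}{2\pi}\log\tfrac{t}{2\pi}$ and $2\pi L = \log T$, the mean value theorem gives $\Delta M(t) = (\lambda/\log T)\log(t/2\pi) + O(\lambda^2/(L^2 t))$; squaring and using $\int_0^T \log^2(t/2\pi)\,dt = T\log^2 T + O(T\log T)$ yields $\int_0^T (\Delta M)^2\,dt = \lambda^2 T + O(\lambda^2 T/L)$.

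The crucial step is the cross-term $\int_0^T \Delta M\cdot\Delta S\,dt$: Cauchy--Schwarz only produces $O(\lambda T\sqrt{\log(2+\lambda)})$, which is too weak. Instead, exploit the slow variation of $\Delta M$ by the shift identity
\[
    \int_0^T f(t)\,\Delta S(t)\,dt = \int_h^T [f(u-h) - f(u)]\,S(u)\,du + \int_T^{T+h} f(u-h) S(u)\,du - \int_0^h f(u) S(u)\,du
\]
with $f := \Delta M$. The interior factor $f(u-h) - f(u) = 2M(u) - M(u+h) - M(u-h)$ is a symmetric second difference of $M$, equal to $O(h^2/u) = O(\lambda^2/(L^2 u))$ by Taylor. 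Combined with the unconditional bound $|S(u)| \ll \log u$, the bulk contributes $\ll (\lambda^2/L^2)\int_1^T (\log u)/u\,du \ll \lambda^2$, and the boundary pieces are similarly $O(\lambda^2)$; for $\lambda = O(L)$ the whole cross-term is $O(L^2)$. An analogous shift (now using the stronger bound $R''(u) = O(1/u^3)$) handles $\int \Delta S\cdot\Delta R$, and the remaining pieces $\int \Delta M\cdot\Delta R$ and $\int (\Delta R)^2$ are bounded directly within $O(L^2)$. Assembling gives the first formula.

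For the second formula, I would follow Selberg--Fujii--Tsang by inserting the approximate representation
\[
    S(t) = -\frac{1}{\pi}\sum_{n\le x}\frac{\Lambda_x(n)\sin(t\log n)}{n^{1/2}\log n} + E(t,x),
\]
with $x$ a small power of $T$ and $E(t,x)$ controlled in mean square. Then $\Delta S(t)$, up to $\Delta E$, becomes a sum of $\cos((t + \tfrac{\lambda}{2L})\log n)$ terms weighted by $2\sin(\tfrac{\lambda}{2L}\log n)/(n^{1/2}\log n)$. Squaring and integrating over $t\in[0,T]$: off-diagonal pairs $n\neq m$ decay like $|\log(n/m)|^{-1}$, while the diagonal contribution reduces to $(T/\pi^2)\sum_{n\le x}\Lambda_x(n)^2\sin^2(\tfrac{\lambda}{2L}\log n)/(n\log^2 n)$. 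Splitting the prime sum at $\log n \asymp L/\lambda$ (small $n$, where $\sin^2 \approx (\lambda \log n/(2L))^2$ is negligible; large $n$, where $\sin^2$ averages to $1/2$) and invoking $\sum_{p\le y}(\log p)/p = \log y + O(1)$ produces $(T/\pi^2)\log(2+\lambda) + O(T\sqrt{\log(2+\lambda)})$, once the mean-square bound on $E(t,x)$ is absorbed.

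The principal obstacle is the cross-term cancellation in the first formula: Cauchy--Schwarz is hopeless, so the shift argument exploiting the smoothness of $M$ (and the analogous decay of the Stirling remainder $R$) is indispensable. The Selberg sum analysis underlying the second formula is technically delicate but classical; the critical calibration is the choice of the truncation $x$ so that the mean-square contribution of $E(t,x)$ exactly matches the $\sqrt{\log(2+\lambda)}$ remainder on the right-hand side.
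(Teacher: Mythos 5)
This proposition is not proved in the paper at all: it is quoted from Gallagher--Mueller, Fujii and Tsang, with the proofs (in the form used here) given in Section 4 of the authors' earlier paper \cite{GLSS1}. Your sketch is a correct outline of that classical route, and the overall architecture matches: write $N=M+S+R$ with $M(t)=\tfrac{t}{2\pi}\log\tfrac{t}{2\pi e}+\tfrac78$ and $R$ the smooth Stirling remainder, get $\lambda^2T+O(\lambda^2T/L)$ from $(\Delta M)^2$, and evaluate $\int_0^T(\Delta S)^2$ by Selberg's method as in Fujii and Tsang. Your one genuinely different ingredient is the cross-term treatment: the standard argument exploits Littlewood's estimate $\int_0^T S(u)\,du\ll\log T$ (integrating the slowly varying factor against $S_1$), whereas you shift the integral, reduce to the second difference $2M(u)-M(u+h)-M(u-h)=O(h^2/u)$, and use only the pointwise bound $S(u)\ll\log u$; this does give $O(h^2\log^2T)=O(\lambda^2)$ for the bulk and $O(\lambda^2)$ for the boundary pieces, which is amply within the stated $O(\lambda^2T/L)+O(L^2)$, so your more elementary device works (at the cost of being slightly less sharp than the $S_1$ argument, which is harmless here). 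Two small points to tidy: your diagonal term should carry a factor $2T/\pi^2$ rather than $T/\pi^2$ (the identity $\sin A-\sin B=2\cos\tfrac{A+B}{2}\sin\tfrac{A-B}{2}$ followed by $\int\cos^2\approx T/2$ leaves a net factor $2$, which is exactly what produces $\tfrac{T}{\pi^2}\log(2+\lambda)$ after the prime sum), and the real labor in the second formula is the unconditional mean-square control of $E(t,x)$ and the uniformity in $\lambda$, which you correctly attribute to Selberg--Fujii--Tsang but only gesture at; as a proof by reduction to those cited results, the proposal is sound.
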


Combining the two results, we have for any $\lambda>0$, as $T\to\infty$,
\begin{equation} \begin{split} \label{Dsec2b}
\Ss(T,\lambda)-\lambda^2T &= \int_0^T \left(S\left(t+\tfrac\lambda{L}\right) - S(t)\right)^2\, dt + O\left(\frac{\lambda^2T}L\right) + O(L^2) \\
&= \frac{T}{\pi^2} \log(2+\lambda) + O\left(T\sqrt{\log(2+\lambda)}\right) + O\left(\lambda^2T/L\right).
\end{split}\end{equation}

\section{Proof of Theorem \ref{thm2} and Corollary \ref{cor2}}
\label{sec4}

\begin{proof}[Proof of Theorem \ref{thm2}]
We first evaluate $\Ss(T,M)$ using \hyperref[AH_GLSS]{\bf AH-Pairs}.
Letting $M$ be a positive even integer and taking $\lambda=M$ in \eqref{Dsec2a}, we have on using \eqref{switch} and then \hyperref[AH_GLSS]{\bf AH-Pairs} that 
\[ \begin{split} \Ss(T,M) &= \sum_{\substack{\rho,\rho' \\ 0<\gamma, \gamma'\le T \\ |(\gamma-\gamma')L|\le M }} \left(\frac{M}{L}-|\gamma-\gamma'|\right) = \sum_{\substack{\rho,\rho' \\ (\gamma,\gamma')\in \mathcal{P}(T,M) \\ |(\gamma-\gamma')L|\le M }} \left(\frac{M}{L}-|\gamma-\gamma'|\right) +O\left( \frac{M}{L}\frac{(1+M)T}{L}\right) \\&
= \sum_{|k|\le 2M} \sum_{\substack{\rho, \rho' \\ (\gamma, \gamma') \in \mathcal{B}_{k/2}(T)}} \left(\frac{M}{L}-|\gamma-\gamma'|\right)+O\left( \frac{M^2T}{L^2}\right)\\
&= T\sum_{|k|\le 2M} \bigl(M-{|k|}/{2} + O\left((|k|+1) R(T)\right)\bigr) P_{k/2}(T) + O\left( \frac{M^2T}{L^2}\right).
\end{split}\]

Writing $k=2j$ if $k$ is even, and $k=2j-1$ otherwise,  we have by \eqref{P_k/2sumbound} as $T\to\infty$,
\begin{equation} \begin{split}\label{D(T,M)1}
\Ss(T,M) &= T\big(M+O(R(T)\big)P_0(T)+ 2T\sum_{k=1}^{2M}\left(M- k/2 + O\left(k R(T)\right)\right) P_{k/2}(T)  +O\left( \frac{M^2T}{L^2}\right) \\
&= \left(MP_0(T) + 2\sum_{k=1}^{2M} \left(M-k/2\right) P_{k/2}(T) + O\left(MR(T)\sum_{k=0}^{2M}P_{k/2}(T) +\frac{M^2}{L^2}\right) \right)T \\
&= \Biggl(MP_0(T) + 2\sum_{j=1}^{M}\left( (M-j+1/2)P_{j-1/2} + (M-j)P_j(T) \right) \\
&\qquad+ O\left(M^2\left(R(T) + \frac1{L^2}\right)\right)\Biggr)T \\
&= \Biggl(MP_0(T) + 2\sum_{j=1}^{M} (M-j)(P_{j-1/2}(T) + P_j(T)) + \sum_{j=1}^{M}P_{j-1/2}(T) \\
&\qquad+ O\left(M^2\left(R(T)+ \frac1{L^2}\right)\right)\Biggr)T.
\end{split}\end{equation}

Next, taking $\lambda=M$ in \eqref{Dsec2b}, we have as $T\to \infty$
\begin{equation}\label{D(T,M)2}
\begin{aligned}
\Ss(T,M) &= \left(M^2 + \frac{\log M}{\pi^2} + O\left(\sqrt{\log M}\right)\right)T \\
&= \left(\frac32 M + 2\sum_{j=1}^{M}(M-j)\left(1-\frac{2}{\pi^2(2j-1)^2} \right)  + O\left(\sqrt{\log M}\right) \right)T,
\end{aligned}
\end{equation}
where the second equality is obtained by noting that
\begin{equation*}
\begin{aligned}
2\sum_{j=1}^{M} (M-j)\left(1-\frac{2}{\pi^2(2j-1)^2} \right)
&= 2\sum_{j=1}^{M}(M-j) - \frac{4}{\pi^2}\sum_{j=1}^{M}\frac{M-j}{(2j-1)^2} \\
&=
\left( 
    M^2 - M \right) - \left(
    \frac{M}2 - \frac{\log M}{\pi^2} + O(1)
\right) \\
&= M^2 -\frac32M + \frac{\log M}{\pi^2} + O(1).
\end{aligned}
\end{equation*}
Combining \eqref{D(T,M)1} and \eqref{D(T,M)2} and dividing by $T$, we obtain \eqref{mainthm}.
\end{proof}

\begin{proof}[Proof of Corollary \ref{cor2}] Assuming \hyperref[AH_GLSS]{\bf AH-Pairs} and \eqref{SumtwoPs}, by \eqref{thm3a} and  $P_{k/2}(T)\ge 0$ we have 
\[ P_0(T)\le P_0(T) +\frac1{M}\sum_{j=1}^M P_{j-1/2}(T) = \frac32 + O\left(\frac{\sqrt{\log M}}{M}\right) + O\left(M\left(R(T)+R_P(T)+ \frac{1}{L^2}\right)\right).\]
Thus, as $T\to \infty$,
\begin{equation}
\limsup_{T\to \infty}P_0(T) \le \frac32 + O\left(\frac{\sqrt{\log M}}{M}\right),
\qquad \text{and thus}
\qquad
P_0(T) \le \frac32 + o(1)
\end{equation}
because $M$ can be taken as large as we wish.
Next, by \eqref{eq:cor1:eq1} we have as $T\to \infty$
\begin{align*}
N^\circledast(T)\le N^\circledast(T) + 2N(T,C_0R(T))
&= \sum_{\substack{\rho,\rho' \\ 0<\gamma, \gamma'\le T \\ |(\gamma-\gamma')L|\le C_0R(T) }} 1 =  \sum_{\substack{\rho,\rho' \\ \frac{T}{\log^2\!T}<\gamma,\gamma'\le T \\ |(\gamma-\gamma')L| \le C_0R(T)}} 1 + O\left(\frac{T}{L}\right)\\
&= |\mathcal{B}_0(T)| +O\left(\frac{T}{L}\right)
= P_0(T)\, TL  + o(TL)
\le \left(\frac32+o(1)\right) TL.
\end{align*}
From \cite[Equation (1.10)]{GLSS1}
we have
\begin{equation}\label{cor2proof} N^*(T) \le
N^*(T) + N_{\beta\neq \frac12}^*(T) \le N^\circledast(T) \le 
\left(\frac32+o(1)\right) TL,
\end{equation}
where 
\[ N_{\beta\neq \frac12}^*(T) := \sum_{\substack{\rho\\ 0<\gamma \le T\\ \beta \neq \frac12}}m_\rho . \]
Thus, by the same argument as in \cite[Equation (1.11) and below]{GLSS1}, we have, letting
\begin{equation}\label{Thm1eq} N_0(T) := \sum_{\substack{\rho\\0<\gamma \le T\\ \beta=\frac12}} 1 \qquad \text{and}\qquad N_s(T) := \sum_{\substack{\rho\ \text{simple} \\ 0<\gamma\le T}} 1,\end{equation} that, by \eqref{cor2proof},
\[ N_s(T) \ge \sum_{\substack{\rho \\ 0<\gamma\le T}} (2-m_\rho)
= 2N(T)- N^*(T) \ge 2N(T) - \left(\frac32+o(1)\right)TL = \left(\frac12+o(1)\right)N(T). \]
Similarly, since $N^*(T) \ge N(T)$, we have by \eqref{cor2proof}
\[
 N_{\beta\neq \frac12}^*(T)\le 
\left(\frac32+o(1)\right)TL
- N^*(T)
\le 
\left(\frac12+o(1)\right)N(T) , \]
which implies since $N_0(T) + N_{\beta\neq \frac12}^*(T)\ge N(T)$ that 
$N_0(T) \ge (\frac12+o(1)) N(T)$.
\end{proof}

\section*{Acknowledgement of Funding}

The first, second, and fourth authors thank the American Institute of Mathematics for its hospitality and for providing a pleasant research environment where this research project first started. The second author was supported by the National Research Foundation of Korea (NRF) grant funded by the government of the Republic of Korea (MSIT) (No. RS-2025-00553763, RS-2024-00415601, and RS-2024-00341372). The fourth author was supported by the Inamori Research Grant 2024 and JSPS KAKENHI Grant Number 22K13895.

\bibliographystyle{alpha}
\bibliography{AHReferences}

\end{document}